\documentclass[preprint,12pt]{elsarticle}



\usepackage{graphicx}
\usepackage{amssymb}
\usepackage{amsthm}
\usepackage[fleqn]{amsmath}
\usepackage{mathtools}
\usepackage{cases}
\usepackage{lipsum}

\makeatletter
\def\ps@pprintTitle{%
 \let\@oddhead\@empty
 \let\@evenhead\@empty
 \def\@oddfoot{}%
 \let\@evenfoot\@oddfoot}
\makeatother




\biboptions{sort&compress}
\newtheorem{thm}{Theorem}

\theoremstyle{example}

\theoremstyle{definition}

\theoremstyle{remark}
\newtheorem{rem}{Remark}

\journal{}

\begin{document}

\begin{frontmatter}



\title{Precise interpretation of
the conformable fractional derivative}
\author{Ahmed A. Abdelhakim}
\address{Mathematics Department, Faculty of Science, Assiut University, Assiut 71516 - Egypt\\
Email: ahmed.abdelhakim@aun.edu.eg}
\begin{abstract}
Let $\alpha\in\,]0,1[$.
We prove that the existence of the conformable fractional derivative $T_{\alpha}f$
of a function $f:[0,\infty[\,\longrightarrow \mathbb{R}$ introduced by Khalil et al. in [R. Khalil, M. Al Horani, A. Yousef, M. Sababheh, A new definition of fractional derivative, J. Comput. Appl. Math. 264 (2014) 65-70] is equivalent to classical differentiability. Precisely
the fractional $\alpha$-derivative of $f$
is the pointwise product
$T_{\alpha}f(x)=x^{1-\alpha}f^{\prime}(x)$, $x>0$.
This simplifies the recent results
concerning conformable fractional calculus.
\end{abstract}
\end{frontmatter}
Let $0<\alpha<1$. The authors in \cite{khalil} proposed
defining the ``conformable fractional derivative"
$T_{\alpha}f$ of a function $f:[0,\infty[\,\longrightarrow \mathbb{R}$ by
\begin{equation}\label{q0}
T_{\alpha}f(x):=
\lim_{\epsilon\rightarrow 0}
\frac{f(x+\epsilon x^{1-\alpha})-f(x)}{\epsilon},\quad x>0,
\end{equation}
provided the limit exists, in which case $f$ is called $\alpha$-differentiable. And if $f$ is $\alpha$-differentiable on $\,]0,a[\,$ for some $a>0$, then $T_{\alpha}f(0):=\lim_{x\rightarrow 0^{+}}T_{\alpha}f(x)$. They also show
(\cite{khalil}, Theorem 2.2)
that if a function $f$ is differentiable at some $x>0$ then $T_{\alpha}f(x)$ exists and
\begin{equation}\label{q1}
T_{\alpha}f(x)=x^{1-\alpha}f^{\prime}(x).\\
\end{equation}
\indent We show that if $f$ is $\alpha$-differentiable
at some $x>0$ then it must be differentiable
at $x$ and (\ref{q1}) is satisfied. This means
that differentiability and $\alpha$-differentiability
in the sense of the existence of the limit
(\ref{q0}) are equivalent. In particular
a function $f:[0,\infty[\,\longrightarrow \mathbb{R}$ is $\alpha$-differentiable
at $x=0$ if and only if it is differentiable
on $\,]0,a[\,$ for some $a>0$, and
\begin{equation}\label{atz}
T_{\alpha}f(0)=\lim_{x\rightarrow 0^{+}}x^{1-\alpha}f^{\prime}(x).
\end{equation}
Thus
$T_{\alpha}f(0)=0$ for every function $f$
continuously differentiable on a neighbourhood of $0$
or, more gereally, differentiable on a right neighbourhood of 0 with a bounded derivative therein.
\\
\indent This equivalence between
conformable $\alpha$-differentiability and classical differentiability is already implicitly pointed out by
Tarasov \cite{Tarasov} where he proves
that the violation of the Leibniz rule,
$D^{\alpha}(fg)=f D^{\alpha}g+g D^{\alpha}f$,
is necessary for the order $\alpha$
of a fractional derivative $D^{\alpha}$
to be non-integer. The fractional
derivative $T_{\alpha}$ does satisfy the Leibniz rule
(\cite{khalil}, Theorem 2.2).
Our proof is far simpler and enables us to express
$T_{\alpha}f$ explicitly in terms of $f^{\prime}$
via (\ref{q1}).
Using this explicit pointwise relation would extremely
simplify many of the proofs/computational methods in
\cite{Abdeljawad,Benaoumeur,Chen,Eslami,Ghazala,
Hosseini,Katugampola,khalil,Morales,
Ortigueira,Tarasov,Yang,Zhou} to name a few.
\\
\indent It is claimed in \cite{khalil} that
an $\alpha$-differentiable function is not necessarily
differentiable.
We have not found any counterexamples
in the literature that support this claim except for the one example  (see \cite{khalil}) of $g(x):=\sqrt{x}$.
Of course, while $T_{\frac{1}{2}}g(0)=1$,
$g^{\prime}(0)$ does not exist. First, this does not apply to the translation of $g$ to any $x_{0}>0$. Simply consider
$h(x):=\sqrt{x-x_{0}}$ with $x_{0}>0$.
Then $T_{\alpha}h(x_{0})={x_{0}^{\frac{1-\alpha}{2}}}
\lim_{\epsilon\rightarrow 0}\frac1{
\sqrt{\epsilon}}$
does not exist for any $0\leq \alpha<1$, and neither does $h^{\prime}(x_{0})$. Second,
since the existence of $f^{\prime}(0)$
is independent of the existence of
$\lim_{x\rightarrow 0^{+}} x^{1-\alpha}
f^{\prime}(x)$, we realize from (\ref{atz})
that differentiability at 0 is independent of
$\alpha$-differentiability there. For instance,
the function $\tilde{g}:=x^2 \chi_{\mathbb{Q}}$
is differentiable only at 0, and therefore,
by (\ref{atz}),
$T_{\alpha}\tilde{g}(0)$ does not exist.
\begin{thm}
Fix $\,0<\alpha<1$, $x>0$. A function
$\,f:[0,\infty[\,\longrightarrow \mathbb{R}\,$
has a conformable fractional derivative of
order $\alpha$ at $x$ if and only if
it is differentiable at $x$ and
(\ref{q1}) holds.
\end{thm}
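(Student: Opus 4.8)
The plan is to reduce the conformable difference quotient to the ordinary one by a single linear change of the increment, exploiting that $x>0$ is held fixed throughout. The key observation is that for such a fixed $x$ the quantity $x^{1-\alpha}$ is merely a positive constant, so the substitution $h=\epsilon\,x^{1-\alpha}$ (equivalently $\epsilon=x^{\alpha-1}h$) sets up a homeomorphism of a punctured neighbourhood of $0$ onto itself that fixes $0$; in particular $\epsilon\to0$ if and only if $h\to0$, and the limits remain two-sided on both sides of the correspondence.

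First I would record the algebraic identity
\begin{equation*}
\frac{f(x+\epsilon x^{1-\alpha})-f(x)}{\epsilon}
=x^{1-\alpha}\,\frac{f(x+h)-f(x)}{h},\qquad h=\epsilon\,x^{1-\alpha},
\end{equation*}
valid for every $\epsilon\neq0$ small enough that $x+\epsilon x^{1-\alpha}>0$, which is guaranteed since $x>0$ and $f$ is defined on $[0,\infty[$. Because $x^{1-\alpha}$ is a nonzero constant, the left-hand side converges as $\epsilon\to0$ precisely when the right-hand difference quotient has a limit as $h\to0$, and the two limits then differ only by the factor $x^{1-\alpha}$.

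This single identity yields both implications at once. If $f$ is $\alpha$-differentiable at $x$, then the left-hand side converges, hence so does the ordinary difference quotient, so $f'(x)$ exists and $T_{\alpha}f(x)=x^{1-\alpha}f'(x)$, which is (\ref{q1}); conversely, differentiability of $f$ at $x$ makes the right-hand side converge and forces both the existence of $T_{\alpha}f(x)$ and (\ref{q1}). The forward implication recovers Theorem~2.2 of \cite{khalil}, while the reverse implication is the new content.

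I do not anticipate a genuine obstacle: the whole argument is a change of variables, and the only point requiring care is that it be legitimate as a \emph{two-sided} limit and that it rests essentially on $x>0$, so that $x^{1-\alpha}$ is a well-defined nonzero constant. This is exactly why the value at $x=0$ cannot be treated this way and must instead be defined through the one-sided limit (\ref{atz}).
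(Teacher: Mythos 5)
Your proof is correct and is essentially the paper's own argument: the paper also reduces one difference quotient to the other by the linear substitution of the increment (writing $\epsilon = (\epsilon x^{\alpha-1})\,x^{1-\alpha}$ and pulling out the constant factor $x^{\alpha-1}$), relying on exactly the fact you emphasize, that $x^{1-\alpha}$ is a fixed nonzero constant for $x>0$. The only cosmetic difference is that the paper invokes Theorem~2.2 of \cite{khalil} for the forward implication and proves only the converse, whereas you obtain both directions from the single identity.
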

\begin{proof}
By Theorem 2.2 in \cite{khalil}, it suffices
to prove that if $T_{\alpha}f(x)$
exists then so does $f^{\prime}(x)$.
We have
\begin{eqnarray*}
\lim_{\epsilon\rightarrow 0}
\frac{f(x+\epsilon)-f(x)}{\epsilon}
 &=&\lim_{\epsilon\rightarrow 0} x^{\alpha-1}
\frac{f(x+(\epsilon\,x^{\alpha-1})\,x^{1-\alpha})-f(x)}{
\epsilon\,x^{\alpha-1} }   \\
   &=&x^{\alpha-1}\lim_{\epsilon\rightarrow 0}\,
\frac{f(x+\epsilon\,x^{1-\alpha})-f(x)}{
\epsilon}.
\end{eqnarray*}
\end{proof}
Another analogous attempt to define a conformable
$\alpha$-derivative of a function
$f:[0,\infty[\,\rightarrow \mathbb{R}$
appears in
\cite{Katugampola1} where
the $\alpha$-derivative takes the form:
\begin{equation*}
U_{\alpha}f(x):=
\lim_{\epsilon\rightarrow 0}
\frac{f(x e^{\epsilon x^{-\alpha}})-f(x)}{\epsilon},\quad x>0,
\end{equation*}
provided the limit exists,
$U_{\alpha}f(0):=\lim_{x\rightarrow 0^{+}}
x^{1-\alpha}U_{\alpha}f(x)$.
It is shown in (\cite{Katugampola1}, Theorem 2.3)
that if $f^{\prime}(x)$ exists
at some $x>0$ then so does
$U_{\alpha}f(x)$ and
\begin{equation}\label{qq1}
U_{\alpha}f(x)=x^{1-\alpha}f^{\prime}(x).
\end{equation}
It is claimed in \cite{Katugampola1}
that there exists a
conformable $\alpha$-differentiable
function that is not differentiable without providing
any counterexamples. With the exception of
the origin, this claim is also
false.
\begin{thm}\label{thm2}
Fix $\,0<\alpha<1$, $x>0$. A function
$\,f:[0,\infty[\,\longrightarrow \mathbb{R}\,$
has a conformable fractional derivative $U_{\alpha}f(x)$ if and only if
it is differentiable at $x$ and
(\ref{qq1}) holds.
\end{thm}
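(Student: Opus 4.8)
The plan is to imitate the proof of Theorem~1. Since the forward implication is exactly Theorem~2.3 in \cite{Katugampola1}, which guarantees that differentiability of $f$ at $x$ yields the existence of $U_{\alpha}f(x)$ together with (\ref{qq1}), it remains only to show that the existence of $U_{\alpha}f(x)$ forces $f$ to be differentiable at $x$. So I would assume that the limit defining $U_{\alpha}f(x)$ exists and try to recover the ordinary difference quotient $\bigl(f(x+\epsilon)-f(x)\bigr)/\epsilon$ from it.

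The natural device is a change of variable that turns the additive increment $x+\epsilon$ into the multiplicative increment $x\,e^{\delta x^{-\alpha}}$ appearing in the definition of $U_{\alpha}$. Concretely I would set $\delta=\delta(\epsilon):=x^{\alpha}\ln\!\bigl(1+\epsilon/x\bigr)$, so that $x\,e^{\delta x^{-\alpha}}=x+\epsilon$ and, as $\epsilon\to 0$, one has $\delta\to 0$ with $\delta\neq 0$ whenever $\epsilon\neq 0$. Writing
\[
\frac{f(x+\epsilon)-f(x)}{\epsilon}
=\frac{f\!\bigl(x\,e^{\delta x^{-\alpha}}\bigr)-f(x)}{\delta}\cdot\frac{\delta}{\epsilon}
=\frac{f\!\bigl(x\,e^{\delta x^{-\alpha}}\bigr)-f(x)}{\delta}\cdot\frac{x^{\alpha}\ln(1+\epsilon/x)}{\epsilon},
\]
the first factor tends to $U_{\alpha}f(x)$ while the second tends to $x^{\alpha-1}$ by the elementary limit $\ln(1+u)/u\to 1$ as $u\to 0$. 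Hence $f'(x)$ exists and equals $x^{\alpha-1}U_{\alpha}f(x)$, which is precisely (\ref{qq1}).

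The only point that needs care --- and the sole place where this argument differs from that of Theorem~1 --- is the legitimacy of the substitution inside the limit. In Theorem~1 the change of variable $\epsilon\mapsto \epsilon\,x^{\alpha-1}$ is a linear rescaling, whereas here $\epsilon\mapsto\delta(\epsilon)$ is nonlinear because of the logarithm. I would dispatch this by noting that $\delta(\cdot)$ is continuous and strictly increasing on a neighbourhood of $0$ with $\delta(0)=0$, hence a homeomorphism of a punctured neighbourhood of $0$ onto a punctured neighbourhood of $0$; consequently letting $\epsilon\to 0$ drives $\delta$ through all values near $0$, so the existence of the limit defining $U_{\alpha}f(x)$ may genuinely be invoked for the first factor. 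Equivalently, I would phrase everything sequentially: for an arbitrary sequence $\epsilon_{n}\to 0$ the associated $\delta_{n}=\delta(\epsilon_{n})\to 0$, and the displayed identity shows that the ordinary difference quotients converge to $x^{\alpha-1}U_{\alpha}f(x)$ independently of the chosen sequence. This mild bookkeeping is the main obstacle; everything else is routine.
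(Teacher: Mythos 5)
Your argument is correct and is essentially identical to the paper's own proof: both rewrite $x+\epsilon$ as $x\,e^{\delta x^{-\alpha}}$ with $\delta=x^{\alpha}\log\left(1+\frac{\epsilon}{x}\right)$, split the ordinary difference quotient into the $U_{\alpha}$-type quotient times $\delta/\epsilon$, and use $\log(1+u)/u\to 1$ to get the factor $x^{\alpha-1}$. Your explicit justification of the nonlinear change of variable inside the limit (that $\delta(\epsilon)\to 0$ with $\delta(\epsilon)\neq 0$ for small $\epsilon\neq 0$) is a detail the paper leaves implicit, but it does not change the route.
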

\begin{proof}
By Theorem 2.3 in \cite{Katugampola1}, we only need
to show that $f^{\prime}(x)$
exists whenever $U_{\alpha}f(x)$ does.
Since
\begin{equation*}
x+\epsilon=
e^{\log{\left(x+\epsilon\right)}}=
xe^{\log{\left(1+\frac{\epsilon}{x}\right)}}
= xe^{\epsilon
x^{-\alpha}
\left(x^{\alpha}\log{\left(1+\frac{\epsilon}{x}\right)^{
\frac{1}{\epsilon}}}\right)}
\end{equation*}
then
\begin{align*}
&\lim_{\epsilon\rightarrow 0}
\frac{f(x +\epsilon)-f(x)}{\epsilon}
 \;=\;\lim_{\epsilon\rightarrow 0}
\frac{f\left(xe^{\epsilon
x^{-\alpha}
\left(x^{\alpha}\log{\left(1+\frac{\epsilon}{x}\right)^{
\frac{1}{\epsilon}}}\right)}\right)-f(x)}{\epsilon}=\\
&=\;\lim_{\epsilon\rightarrow 0}
\frac{f\left(xe^{\epsilon
x^{-\alpha}
\left(x^{\alpha}\log{\left(1+\frac{\epsilon}{x}\right)^{
\frac{1}{\epsilon}}}\right)
}\right)-f(x)}{\epsilon\left(x^{\alpha}
\log{\left(1+\frac{\epsilon}{x}\right)^{
\frac{1}{\epsilon}}}\right)}\,\left(x^{
\alpha}\log{\left(1+\frac{\epsilon}{x}\right)^{
\frac{1}{\epsilon}}}\right).
\end{align*}
Observing that
\begin{equation*}
\lim_{\epsilon\rightarrow 0}
\log{\left(1+\frac{\epsilon}{x}\right)^{
\frac{1}{\epsilon}}}=\frac{1}{x}
\end{equation*}
we infer that
\begin{equation*}
\lim_{\epsilon\rightarrow 0}
\frac{f(x +\epsilon)-f(x)}{\epsilon}
\,=\,x^{\alpha-1}\lim_{\epsilon\rightarrow 0}
\frac{f\left(xe^{\epsilon
x^{-\alpha}}\right)-f(x)}{\epsilon}.
\end{equation*}
\end{proof}
\begin{rem}
Theorem \ref{thm2} substantially simplifies
the proofs in \cite{Anderson,Katugampola1}.
\end{rem}
We conclude with a general principle:
\begin{thm}
Suppose $h:\,]-1,1[\,\times\mathbb{R}\longrightarrow
\mathbb{R}$ is such that
$\,\lim_{\epsilon\rightarrow 0}h(\epsilon,x_{0})\neq 0$
for some $x_{0}\in \mathbb{R}$. Then a function $\phi:\mathbb{R}\longrightarrow
\mathbb{R}$ differentiable at $x_{0}$
if and only if the limit
\begin{equation*}
\tilde{\phi}(x_{0}):=\lim_{\epsilon\rightarrow 0}
\frac{\phi\left(x_{0}+\epsilon h(\epsilon,x_{0})\right)-\phi(x_{0})}{\epsilon}
\end{equation*}
exists, in which case $\tilde{\phi}(x_{0})=
\psi(x_{0})\phi^{\prime}(x_{0})$,
$\; \psi(x)=
\lim_{\epsilon\rightarrow 0}h(\epsilon,x)$.
\end{thm}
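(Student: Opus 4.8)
The plan is to connect the ordinary and the generalized difference quotients through the single substitution $t=\epsilon\,h(\epsilon,x_0)$ and then read the resulting identity in both directions, exactly as in the two preceding theorems. Throughout I write $\psi:=\psi(x_0)=\lim_{\epsilon\to 0}h(\epsilon,x_0)$; by hypothesis $\psi\neq 0$ and it is finite, so $h(\cdot,x_0)$ is bounded near $0$ and stays bounded away from $0$ there. Hence the step $s(\epsilon):=\epsilon\,h(\epsilon,x_0)$ satisfies $s(\epsilon)\to 0$ and $s(\epsilon)\neq 0$ for all sufficiently small $\epsilon\neq 0$. The one identity driving everything is
\[
\frac{\phi(x_0+\epsilon h(\epsilon,x_0))-\phi(x_0)}{\epsilon}
= h(\epsilon,x_0)\cdot\frac{\phi(x_0+s(\epsilon))-\phi(x_0)}{s(\epsilon)}.
\]

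For the forward implication I would assume $\phi'(x_0)$ exists and let $\epsilon\to 0$ in this identity. Since $s(\epsilon)\to 0$ with $s(\epsilon)\neq 0$, the ordinary quotient $\frac{\phi(x_0+s(\epsilon))-\phi(x_0)}{s(\epsilon)}\to\phi'(x_0)$ (the composition of limits is legitimate precisely because $s(\epsilon)$ never vanishes), while $h(\epsilon,x_0)\to\psi$. Thus $\tilde\phi(x_0)$ exists and equals $\psi\,\phi'(x_0)$, which is the asserted formula.

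For the converse I would read the identity the other way: assuming $\tilde\phi(x_0)$ exists, it yields
\[
\frac{\phi(x_0+s(\epsilon))-\phi(x_0)}{s(\epsilon)}
=\frac{1}{h(\epsilon,x_0)}\cdot\frac{\phi(x_0+\epsilon h(\epsilon,x_0))-\phi(x_0)}{\epsilon}
\longrightarrow\frac{\tilde\phi(x_0)}{\psi}\qquad(\epsilon\to 0).
\]
To upgrade this to the genuine two-sided limit defining $\phi'(x_0)$, I must express an arbitrary small increment $t$ as $t=s(\epsilon)=\epsilon\,h(\epsilon,x_0)$ with $\epsilon=\epsilon(t)\to 0$ as $t\to 0$; substituting $t$ for $s(\epsilon)$ then forces $\phi'(x_0)=\tilde\phi(x_0)/\psi$.

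This inversion is the one real obstacle, and it is exactly what the explicit logarithm of Theorem \ref{thm2} and the linear rescaling of the first theorem accomplish in the concrete cases. In general it is guaranteed as soon as $\epsilon\mapsto\epsilon\,h(\epsilon,x_0)$ is continuous on a punctured neighbourhood of $0$: then $s(\cdot)$ is continuous, vanishes at $0$, and satisfies $s(\epsilon)/\epsilon\to\psi\neq 0$, so for small $\epsilon$ the value $|s(\epsilon)|$ is comparable to $|\psi\epsilon|$ and $s(\epsilon)$ has the sign of $\psi\epsilon$; the intermediate value theorem then shows that $s$ carries each small neighbourhood of $0$ onto a neighbourhood of $0$, and any preimage $\epsilon(t)$ of a small $t$ is itself small. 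Choosing such preimages along an arbitrary sequence $t_n\to 0$ and invoking the displayed limit along $\epsilon(t_n)\to 0$ gives $\frac{\phi(x_0+t_n)-\phi(x_0)}{t_n}\to\tilde\phi(x_0)/\psi$ for every such sequence, which is differentiability at $x_0$ together with the claimed formula $\tilde\phi(x_0)=\psi(x_0)\,\phi'(x_0)$. I expect the verification that these preimages tend to $0$ (equivalently, the local surjectivity of $s$ near $0$) to be the only point requiring genuine care; everything else is the same two-line manipulation already used twice.
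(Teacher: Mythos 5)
Your substitution $t=\epsilon\,h(\epsilon,x_{0})$ is exactly the mechanism behind the paper's proofs of the two concrete theorems (where $h(\epsilon,x)=x^{1-\alpha}$ and $h(\epsilon,x)=x\bigl(e^{\epsilon x^{-\alpha}}-1\bigr)/\epsilon$ respectively); for this final ``general principle'' the paper supplies no proof at all, so there is nothing more specific to compare against. Your forward direction is complete and correct: $h(\epsilon,x_{0})\to\psi\neq0$ forces $s(\epsilon)=\epsilon h(\epsilon,x_{0})\to0$ with $s(\epsilon)\neq0$ for small $\epsilon\neq0$, the composition of limits is legitimate, and $\tilde{\phi}(x_{0})=\psi(x_{0})\phi^{\prime}(x_{0})$ drops out.

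The obstacle you isolate in the converse is not a weakness of your write-up but a genuine defect of the theorem as stated: the hypothesis $\lim_{\epsilon\to0}h(\epsilon,x_{0})\neq0$ alone does not guarantee that $s$ covers a punctured neighbourhood of $0$, and without that coverage the converse is false. For instance, take $x_{0}=0$ and $h(\epsilon,x)=1$ for irrational $\epsilon$, $h(\epsilon,x)=1+\epsilon$ for rational $\epsilon$; then $\psi=1$, but the range of $s(\epsilon)=\epsilon h(\epsilon,0)$ contains a rational $t$ only when $1+4t$ is the square of a rational, so it omits every $t=1/p$ with $p$ an odd prime. Setting $\phi(t)=t$ at those omitted points and $\phi=0$ elsewhere gives $\tilde{\phi}(0)=0$ while $\phi^{\prime}(0)$ does not exist. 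So some hypothesis such as the continuity of $\epsilon\mapsto h(\epsilon,x_{0})$ near $0$ that you add is actually necessary, and with it your intermediate-value argument (local surjectivity of $s$ plus the comparability of $|s(\epsilon)|$ with $|\psi||\epsilon|$, which sends preimages of small $t$ to $0$) closes the converse correctly. In short: your proof is right, it is the natural generalization of the paper's two explicit computations, and it exposes that the theorem as printed needs a mild extra assumption that the linear and logarithmic changes of variable provide for free in the two concrete cases.
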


\textbf{References}
\bibliographystyle{model1a-num-names}
\bibliography{<your-bib-database>}

\end{document}